\documentclass[11pt]{article}
\date{}

\usepackage[title]{appendix}
\usepackage{xcolor}
\usepackage[margin=1in]{geometry}                
\geometry{a4paper}                   
\usepackage{graphicx}
\usepackage{subcaption}
\usepackage{pdflscape}
\usepackage{amssymb}
\usepackage[normalem]{ulem}
\usepackage{hyperref}
\usepackage{enumitem}
\usepackage{mathrsfs}
\usepackage{epstopdf}
\usepackage{rotating}
\usepackage{longtable} 
\usepackage{adjustbox}
\usepackage{float}

\usepackage{color}
\usepackage{bbm, dsfont}
\usepackage{pst-node}
\usepackage{tikz-cd}
\DeclareGraphicsRule{.tif}{png}{.png}{`convert #1 `dirname #1`/`basename #1 .tif`.png}
\usepackage{amsfonts} 
\usepackage{geometry}
\usepackage{amsthm}
\usepackage{amsmath}
\usepackage{titlesec}
\usepackage{amssymb}
\usepackage{enumitem}
\usepackage{float}
\usepackage [english]{babel}
\usepackage [autostyle, english = american]{csquotes}
\usepackage{algorithm}
\usepackage[noend]{algpseudocode} 
\makeatletter
\def\BState{\State\hskip-\ALG@thistlm}
\makeatother
\usepackage{hyperref}
\usepackage[normalem]{ulem}

\newlist{casess}{enumerate}{1}
\setlist[casess]{label=     \textbf{Case} \arabic*:}
\usepackage{mathtools}


\makeatletter
\newcommand*{\rom}[1]{\expandafter\@slowromancap\romannumeral #1@}
\makeatother

\usepackage{etoolbox}

\makeatletter
\patchcmd{\ttlh@hang}{\parindent\z@}{\parindent\z@\leavevmode}{}{}
\patchcmd{\ttlh@hang}{\noindent}{}{}{}
\makeatother

\usepackage{listings}
\usepackage{color} 
\definecolor{mygreen}{RGB}{28,172,0} 
\definecolor{mylilas}{RGB}{170,55,241}

\newlist{Assumptions}{enumerate}{1}
\setlist[Assumptions]{label=     \textbf{Assumption} \arabic*:}

\makeatletter

\newsavebox{\@brx}
\newcommand{\llangle}[1][]{\savebox{\@brx}{\(\m@th{#1\langle}\)}%
  \mathopen{\copy\@brx\kern-0.5\wd\@brx\usebox{\@brx}}}
\newcommand{\rrangle}[1][]{\savebox{\@brx}{\(\m@th{#1\rangle}\)}%
  \mathclose{\copy\@brx\kern-0.5\wd\@brx\usebox{\@brx}}}
\makeatother

\usepackage{lipsum} 
\usepackage{titlesec}
\titleformat{\subsection}[runin]
       {\normalfont\bfseries}
       {\thesubsection}
       {0.5em}
       {}
       [.]




\newcommand{\A}{\mathfrak{A}}

\newcommand{\B}{\mathfrak{B}} 

\newcommand{\CC}{\mathbb{C}}

\usepackage{tikz-cd}
\usepackage{graphicx}

\def\X{\mathcal{X}}

\def\e{{\sf e}}

\def\BofH{\mathbb B(\mathcal H)}

\def\d{{\rm d}}

\def\({\left(}
\def\[{\left[}
\def\){\right)}
\def\]{\right]}

\def\G{{\sf G}}

\def\H{{\sf H}}

\def\<{\langle}
\def\>{\rangle}
\providecommand{\norm}[1]{\lVert#1\rVert}

 \newtheorem{thm}{Theorem}[section]
 \newtheorem{cor}[thm]{Corollary}
 
 \newtheorem{lem}[thm]{Lemma}
 \newtheorem{prop}[thm]{Proposition}
 \theoremstyle{definition}
 \newtheorem{defn}[thm]{Definition}
 \theoremstyle{remark}
 
 \newtheorem{ex}[thm]{Example}
 \numberwithin{equation}{section}

\numberwithin{equation}{section}

\usepackage[backend=biber, maxnames=10]{biblatex}
\addbibresource{main.bib}

\begin{document}


\title{A note on finite-dimensional quotients and the problem of automatic continuity for twisted convolution algebras}

\author{Felipe I. Flores
\footnote{
\textbf{2020 Mathematics Subject Classification:} Primary 43A20, Secondary 47L65, 46H40.
\newline
\textbf{Key Words:} Automatic continuity, semisimple, cofinite ideal, bimodule, twisted action, convolution algebra. }
}

\maketitle

\begin{abstract}
In this note, we will show that the twisted convolution algebra $L^1_{\alpha,\omega}(\G,\A)$ associated with a twisted action of a locally compact group $\G$ on a $C^*$-algebra $\A$ has the following property: Every quotient by a closed two-sided ideal of finite codimension produces a semisimple algebra. Afterward, we use this property, together with results by H. Dales and G.Willis, to extend previous results by the author and to produce large classes of examples of algebras with automatic continuity properties.

\end{abstract}

\bigskip
\bigskip
Much of the progress in the study of automatic continuity in Banach algebras has occurred in connection with the study of group algebras, making it intimately related to the field of abstract harmonic analysis. Examples of this phenomenon can be found in Dales's famous book \cite{Da00} or in the survey \cite{Da78}.

In this note, we will study the continuity of intertwining operators, a goal that has already been carried out in the context of group algebras by Willis \cite{Wi80}, Dales and Willis \cite{DaWi83}, and Runde \cite{Ru96}, among others. Particular versions of this problem have also raised independent interest. For example, we can mention the work of Jewell \cite{Je77} and Willis \cite{Wi92} on automatic continuity for derivations, or the work of Runde \cite{Ru94} on automatic continuity for homomorphisms.

The purpose of this note is to extend previous results on the automatic continuity of the Banach $^*$-algebras given by (generalized, twisted) convolution of $L^1$-functions over groups. More specifically, we seek to improve some of the results in \cite{flores2024} by two fundamentally different but related ways. One involves relaxing the condition of compact generation of the group (fundamental to the results of that paper), while the other one is about making the coefficient algebra finite-dimensional and relaxing the conditions imposed on the group. This allows, of course, great generalizations and new examples of automatic continuity phenomena.

Our approach is based on the study of semisimplicity for the quotients by finite codimensional (also called cofinite), closed, two-sided ideals. This property is surprisingly connected with the theory of automatic continuity, as the results in \cite{Ru96}, and specially in \cite{DaWi83}, exemplify. In fact, our approach will make explicit use of theorems in these papers, attributed to Willis (Theorem \ref{WILLIS}) and Dales-Willis (Theorem \ref{DAWI}). These theorems, combined with results of \cite{flores2024} and the result we obtained on semisimplicity, yield new examples of automatic continuity.

The organization of the paper is as follows: In Section \ref{semis} we introduce what we call twisted convolution algebras and prove that their finite-dimensional quotients are semisimple. This is recorded in Theorem \ref{semisimplicity} and its proof. In Section \ref{mainsec} we recall basic concepts of automatic continuity and then we proceed to combine the above-mentioned results to get our results in automatic continuity, finishing the exposition.

\section{Semisimplicity of finite-dimensional quotients}\label{semis}

A {\it twisted action} is a $4$-tuple $(\G,\alpha,\omega,\A)$, where $\G$ is a locally compact group, $\A$ a $C^*$-algebra and continuous maps $\alpha:\G\to{\rm Aut}({\A})$, $\omega:\G\times\G\to \mathcal{UM}({\A})$, such that $\omega$ and $x\mapsto\alpha_x(a)$ satisfy \begin{itemize}
        \item[(i)] $\alpha_x(\omega(y,z))\omega(x,yz)=\omega(x,y)\omega(xy,z)$,
        \item[(ii)] $\alpha_x\big(\alpha_y(a)\big)\omega(x,y)=\omega(x,y)\alpha_{xy}(a)$,
        \item[(iii)] $\omega(x,\e)=\omega(\e,y)=1, \alpha_\e={\rm id}_{{\A}}$,
    \end{itemize} for all $x,y,z\in\G$ and $a\in\A$. $\e$ denotes the identity in $\G$.

Given such a tuple, one can form the twisted convolution algebra $L^1_{\alpha,\omega}(\G,\A)$, consisting of all Bochner integrable functions $\Phi:\G\to\A$, endowed with the twisted convolution product 
\begin{equation*}\label{convolution}
    \Phi*\Psi(x)=\int_\G \Phi(y)\alpha_y[\Psi(y^{-1}x)]\omega(y,y^{-1}x)\d y
\end{equation*} 
and the involution \begin{equation*}\label{involution}
    \Phi^*(x)=\Delta(x^{-1})\omega(x,x^{-1})^*\alpha_x[\Phi(x^{-1})^*].
\end{equation*} 
With these operations, $L^1_{\alpha,\omega}(\G,\A)$ is a $^*$-Banach algebra under the norm 
$$
\norm{\Phi}_{L^1_{\alpha,\omega}(\G,\A)}=\int_\G\norm{\Phi(x)}_{\A}\,\d x.
$$ 
In these integrals, $\d x$ denotes the Haar measure on $\G$, while $\Delta$ denotes the modular function associated with $\d x$. In the case where $\omega\equiv 1$, we denote the resulting algebra as $L^1_\alpha(\G,\A)$. On the other hand, in the case where $\A=\CC$ and $\alpha\equiv{\rm id}_\CC$, the resulting algebra will be denoted by $L^1_\omega(\G)$ and we will call it a twisted group algebra.

The goal of this chapter is to show that the closed cofinite ideals of $L^1_{\alpha,\omega}(\G,\A)$ produce semisimple quotients, and to do so, we need to introduce a special class of multipliers. It is therefore convenient to recall the definition of the algebra of multipliers of a Banach algebra. 

In what follows, if $\X$ is a Banach space, then $\mathbb B(\X)$ will denote the set of bounded operators $T:\X\to\X$, while ${\rm GL}(\X)\subset\mathbb B(\X) $ will denote the group of bounded operators that are invertible.

\begin{defn}
Let $\B$ be a Banach algebra. A \emph{multiplier} of $\B$ is a pair $m=(\lambda,\mu)$, where $\lambda,\mu\in \mathbb B(\B)$ are such that
$$
a\lambda(b)=\mu(a)b, \quad \lambda(ab)=\lambda(a)b \quad\text{and}\quad \mu(ab)=a\mu(b),
$$
for all $a,b\in\B$.

The set of all multipliers of $\B$ is called the \emph{multiplier algebra} of $\B$ and we denote it by $\mathcal M(\B)$.
\end{defn}

Recall that the product of multipliers is given by the following formula:
\begin{align*}
(\lambda,\mu)(\lambda',\mu')=(\lambda\circ\lambda',\mu'\circ\mu).
\end{align*}
Furthermore, the natural norm in $\mathcal M(\B)$ is given by $\norm{(\lambda,\mu)}_{\mathcal M(\B)}=\max\{\norm{\lambda},\norm{\mu}\}$. If $\B$ is a Banach $^*$-algebra, then the multiplier algebra also has a natural involution, $(\lambda,\mu)^*=(\lambda^*,\mu^*)$, which verifies 
$$
\lambda^*(a)=\mu(a^*)^*\quad\text{ and }\quad\mu^*(a)=\lambda(a^*)^*,\quad\quad\text{for all }a\in\B.
$$
If $\B$ is involutive, then we use $\mathcal{UM}({\B})$ to denote the unitary group of $\mathcal{M}({\B})$.

Note that $\mathcal M(\B)$ is always unital and also contains a copy of $\B$, given by the multipliers $(L_b,R_b)$, $b\in\B$. These multipliers are, naturally, defined by
$$
R_b(a)=ab\quad\text{ and }\quad L_b(a)=ba,\quad\quad\text{for all }a\in\B.
$$

The interesting thing about this inclusion is that, assuming the existence of contractive approximate identities, every non-degenerate representation of $\B$ naturally extends to a representation of $\mathcal M(\B)$. It is a well-known fact that $L^1_{\alpha,\omega}(\G,\A)$ always has a contractive approximate identity, so the following lemma will be of importance to us.

\begin{lem}\label{exten}
Let $\B$ be a Banach algebra, $\X$ be a Banach space, and let $\pi:\B\to \mathbb B(\X)$ be a contractive representation. Assume further that the following are true: \begin{itemize}
\item[(i)] $\B$ has a contractive approximate identity.
\item[(ii)] The representation $\pi$ is non-degenerate, that is, $\overline{\rm span}\{\pi(b)\xi\mid b\in\B, \xi\in \X\}=\X$.
\end{itemize}
Then there exists a unique contractive unital representation $\widetilde\pi: \mathcal M(\B)\to \mathbb B(\B)$, such that $\widetilde\pi\circ\iota_\B=\pi$.
\end{lem}

Given a twisted action $(\G,\alpha,\omega,\A)$, and for $a\in \mathcal{M}(\A),\, y\in\G$, we consider the multiplier $m_{a,y}=(\lambda_{a,y},\mu_{a,y})$ of $L^1_{\alpha,\omega}(\G,\A)$ which is given by 
\begin{align*}
\lambda_{a,y}(\Phi)(x)&=a\alpha_y\big(\Phi(y^{-1}x)\big)\omega(y,y^{-1}x), \\
\mu_{a,y}(\Phi)(x)&=\Delta(y^{-1})\Phi(xy^{-1})\alpha_{xy^{-1}}(a)\omega(xy^{-1},y).
\end{align*}
We also establish the following notation:

$$
\Gamma_{\G,\A}= \{m_{u,y}\mid u\in \mathcal{UM}(\A),\, y\in\G\}.
$$
In the following lemma, we will compile some well-known and easy-to-prove facts, but which are useful for the proofs of our results.

\begin{lem}\label{easy} The following statements are true.
\begin{enumerate}
\item[(i)] $\Gamma_{\G,\A}$ is a group.
\item[(ii)] Every $m_{u,y}\in \Gamma_{\G,\A}$ is unitary and has norm $1$.
\item[(iii)] Every multiplier of the form $m_{a,y}$ can be written as a linear combination of $4$ elements in $\Gamma_{\G,\A}$.
\item[(iv)] The adjoint of $m_{a,y}$ satisfies the formula \begin{equation}\label{inv}
m_{a,y}^{*}=m_{\omega(y^{-1},y)^*\alpha_{y^{-1}}(a^*),y^{-1}},
\end{equation} for all $a\in\mathcal{M}(\A), y\in\G$.
\end{enumerate}
\end{lem}

Next, we proceed to prove the main result of this section. Our proof is based on the fact that representations of compact groups are similar to unitary representations. The relevant fact is the following (see \cite[Theorem 0.1]{Pi05}).

\begin{lem}\label{dix}
Let $\X$ be a finite-dimensional Hilbert space and $V\subset {\rm GL}(\X)$ be a subgroup such that $\sup_{v\in V}\norm{v}_{\BofH}<\infty$. Then there exists a positive and invertible linear transformation $T\in {\rm GL}(\X)$ such that $TvT^{-1}\in \mathcal U(\X)$, for all $v\in V$.
\end{lem}

\begin{thm}\label{semisimplicity}
Let $(\G,\alpha,\omega,\A)$ be a twisted action. If $I\subset \B=L^1_{\alpha,\omega}(\G,\A)$ is a closed, finite-codimensional two-sided ideal, then $I$ is automatically self-adjoint and the quotient algebra $\B/I$ is semisimple.
\end{thm}
\begin{proof}
Since $I$ is closed and finite-codimensional, $\mathcal X=\B/I$ is a finite-dimensional Banach space. Let $\langle\cdot,\cdot\rangle$ be any inner product; since it is finite-dimensional, $\X$ is a Hilbert space with respect to this inner product.

We denote by $\pi:\B\to\mathbb B(\mathcal X)$ the induced representation on the quotient, that is, 
$$
\pi(\Phi)(\Psi+I)=\Phi*\Psi+I,
$$
for all $\Phi,\Psi\in \B$. This representation is contractive and non-degenerate, so, due to Lemma \ref{exten} and abusing the notation, $\pi$ extends to $\mathcal M(\B)$ and, hence, the operators $\pi(m_{a,y})\in\mathbb B(\X)$ are well-defined and uniformly bounded. In fact, it is not difficult to notice that they satisfy the identity
$$
\pi(m_{a,y})(\Psi+I)=m_{a,y}(\Psi)+I,\quad\quad\text{ for all }\Psi\in\B.
$$
For this reason, one observes that
\begin{equation}\label{desinte}
\pi(\Phi)(\Psi+I)=\int_\G \pi(m_{\Phi(y),y})(\Psi+I)\d y=\int_\G m_{\Phi(y),y}(\Psi)\d y +I.
\end{equation}
Now, we note that $V=\{\pi(m)\}_{m\in\Gamma_{\G,\A}}$ satisfies all the conditions of Lemma \ref{dix} and, therefore, there must exist a positive and invertible operator $T\in {\rm GL}(\X)$ such that $T\pi(m)T^{-1}\in \mathcal U(\X)$, for all ${m\in\Gamma_{\G,\A}}$.

We then define the representation $\pi':\B\to\mathbb B(\X)$ given by $\pi'(\Phi)=T\pi(\Phi)T^{-1}$ and we will now prove that it is a $^*$-representation and that ${\rm Ker}\,\pi'=I$, from which it will follow that $\B/I$ is $^*$-isomorphic to $\pi'(\B)$, which is a $C^*$-algebra, and therefore we will have shown that $\B/I$ is semisimple.

Indeed, note that ${\rm Ker}\,\pi'={\rm Ker}\,\pi$. Now, let $\Phi\in{\rm Ker}\,\pi$ and let $\Psi_j\in\B$ be some approximate bounded identity of $\B$. We note that
$$
I=\lim_j\pi(\Phi)(\Psi_j+I)=\lim_j \Phi*\Psi_j+I=\Phi+I,
$$ 
so $\Phi\in I$. This proves that ${\rm Ker}\,\pi'=I$.

Now let's see that $\pi'$ is a $^*$-representation. Indeed, if $m\in\Gamma_{\G,\A}$ and $\xi,\eta\in\X$, then one has 
\begin{align*}
    \langle \pi'(m)\xi,\eta\rangle&=\langle \xi,\pi'(m)^{*}\eta\rangle \\
    &=\langle \xi,\pi'(m)^{-1}\eta\rangle \\
    &=\langle \xi,\pi'(m^{-1})\eta\rangle=\langle \xi,\pi'(m^*)\eta\rangle.
\end{align*}

But, remembering that every $m_{a,y}$ can be written as a linear combination of $4$ elements in $\Gamma_{\G,\A}$ (point \emph{(iii)} of Lemma \ref{easy}), we see that
$$
\pi'(m_{a,y})^*=\pi'(m^*_{a,y}),\quad\text{ for every }a\in\mathcal{M}(\A),y\in \G.
$$
And, consequently, for $\Phi\in \B,\xi\in\X$, and using the equality \eqref{desinte}, one observes that
\begin{align*}
    \pi'(\Phi^*)\xi=T\pi(\Phi^*)T^{-1}\xi&=T\int_\G \pi\big( m_{\Phi^*(y),y}\big)T^{-1}\xi\,\d y \\
    &= T\int_\G \Delta(y^{-1})\pi\big(m_{\omega(y,y^{-1})^*\alpha_y(\Phi(y^{-1})^*),y}\big)T^{-1}\xi\,\d y \\
    &=T\int_\G \pi\big(m_{\omega(y^{-1},y)^*\alpha_{y^{-1}}(\Phi(y)^*),y^{-1}}\big)T^{-1}\xi\,\d y \\
    &\overset{\eqref{inv}}{=}\int_\G T\pi\big(m_{\Phi(y),y}^*\big)T^{-1}\xi\,\d y \\
    &=\int_\G \pi'(m_{\Phi(y),y})^*\xi\,\d y=\pi'(\Phi)^*\xi,
\end{align*}
 which finishes the proof.
\end{proof}

\section{Applications to the problem of automatic continuity}\label{mainsec}

Let $\B$ be a Banach algebra. A Banach space $\mathcal X$ which is also a $\B$-bimodule is called a \emph{Banach $\B$-bimodule} if the maps $$\B\times \mathcal X\ni(b,\xi)\mapsto b\xi \in\mathcal X \quad\text{ and }\quad \X\times\B\ni(\xi,b)\mapsto \xi b \in\mathcal X$$ are jointly continuous. 

\begin{defn}
    Let $\B$ be a Banach algebra and $\X_1,\X_2$ be a Banach $\B$-bimodule. A linear map $\theta:\X_1\to\X_2$ is called a \emph{$\B$-intertwining operator} if for each $b\in\B$, the maps $$\X_1\ni \xi\mapsto \theta(b\xi)-b\theta(\xi)\in \X_2\quad\text{ and }\quad \X_1\ni \xi\mapsto \theta(\xi b)-\theta(\xi)b\in \X_2$$ are continuous.
\end{defn}

\begin{ex}\label{ex-inter}\begin{enumerate}
        \item[(i)] Every $\B$-bimodule homomorphism between Banach $\B$-bimodules is a $\B$-intertwining operator.
        \item[(ii)] Let $\X$ a Banach $\B$-bimodule. A derivation is a linear map $D:\B\to \X$ satisfying $$D(ab)=D(a)b+aD(b).$$ Every derivation is a $\B$-intertwining operator. 
    \end{enumerate}
\end{ex}

The automatic continuity problem consists in understanding what kind of conditions guarantee that every $\B$-intertwining operator on the Banach algebra $\B$ is necessarily continuous. A fundamental tool for attacking this problem is the so-called continuity ideal, which we introduce below.

\begin{defn}
    Let $\B$ be a Banach algebra, and $\theta:\X_1\to\X_2$ a $\B$-intertwining operator between Banach $\B$-bimodules. Then $$\mathscr I(\theta)=\{b\in\B\mid \X_1\ni \xi\mapsto \theta(b\xi)\in \X_2 \text{ and }\X_1\ni \xi\mapsto \theta(\xi b)\in \X_2\text{ are continuous}\}$$ is the \emph{continuity ideal} of $\theta$.
\end{defn}

Note that $\mathscr I(\theta)$ is closed, since $\X_2$ is a Banach $\B$-bimodule. The following theorem is due to Willis \cite[Lemma 4.3.5]{Wi80}. See also \cite[pag. 498]{Ru96}.
\begin{thm}[Willis]\label{WILLIS}
    Let $\B$ be a Banach algebra, $\X_1,\X_2$ Banach $\B$-modules and $\theta:\X_1\to\X_2$ an $\B$-intertwining operator. Suppose that there is a directed family $\{\B_i\}_i$ of Banach subalgebras of $\B$ such that \begin{enumerate}
        \item $\B=\overline{\bigcup_i \B_i}$ and
        \item for each index $i$, the algebra $\B_i/\B_i\cap \mathscr I(\theta)$ is semisimple and finite-dimensional.
    \end{enumerate} Then $\mathscr I(\theta)$ has finite codimension in $\B$.
\end{thm}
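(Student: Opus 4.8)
The plan is to work entirely inside the quotient Banach algebra $C:=\B/J$, where $J:=\mathscr I(\theta)$ is the continuity ideal (a closed two-sided ideal), acting on the separating space $\mathscr S:=\mathscr S(\theta)$. Recall that $\mathscr S$ is a closed sub-bimodule of $\X_2$ and that, by definition, $J$ is exactly the two-sided annihilator of $\mathscr S$. Consequently, for each index $i$ the finite-dimensional semisimple algebra $A_i:=\B_i/(\B_i\cap J)$ embeds into $C$ as $\bar\B_i:=(\B_i+J)/J$ and acts \emph{faithfully} on $\mathscr S$ as a bimodule; and since $\B=\overline{\bigcup_i\B_i}$, the family $\{\bar\B_i\}_i$ is directed with dense union in $C$.

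First I would reduce the statement to a uniform bound on the dimensions $\dim A_i$. Indeed, if $\dim A_i\le N$ for all $i$, then a directed family of subspaces of $C$ of dimension at most $N$ possesses a member of maximal dimension which, by directedness, contains every other member; hence $\bigcup_i\bar\B_i$ is a single finite-dimensional, thus closed, subspace. As the quotient map carries the dense set $\bigcup_i\B_i$ onto the dense set $\bigcup_i\bar\B_i$, this forces $C=\overline{\bigcup_i\bar\B_i}$ to be finite-dimensional, i.e.\ $J$ has finite codimension. If instead the $\dim A_i$ were unbounded, then by directedness one extracts an increasing chain $\B_{i_1}\subseteq\B_{i_2}\subseteq\cdots$ with $\dim A_{i_n}\to\infty$, giving a nested sequence of finite-dimensional semisimple subalgebras $A_{i_1}\subseteq A_{i_2}\subseteq\cdots$ of $C$, all acting faithfully on $\mathscr S$; this is the configuration I must rule out.

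The tool I would bring in is the stability property of separating spaces. For any sequence $(b_n)\subseteq\B$, the intertwining hypothesis makes left multiplication $L_{b_n}$ on $\X_2$ agree, modulo a continuous operator, with $L_{b_n}$ on $\X_1$, and the classical stability lemma for separating spaces then guarantees that the closed subspaces $\overline{b_1b_2\cdots b_n\,\mathscr S}$ eventually become constant. My aim is to contradict this. Writing $c_n=b_1\cdots b_n$, I would exploit the nested semisimple algebras of unboundedly growing dimension, acting faithfully on $\mathscr S$, to choose $(b_n)$ so that the closed ranges $\overline{c_n\,\mathscr S}$ are strictly decreasing for infinitely many $n$; any such choice is forbidden by stability, and that contradiction forces $\dim A_i$ to be bounded, completing the proof through the reduction above.

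The main obstacle is precisely the production of such a sequence. Choosing the $b_n$ inside the finite-dimensional pieces $A_{i_n}$ does not suffice: there every element has finite rank on $\mathscr S$, so a strictly decreasing chain of finite-dimensional closed subspaces must stabilize for trivial reasons. The phenomenon is illustrated by the compact operators acting on themselves, where no sequence drawn from the finite matrices can violate stability, yet a single ``diagonal'' compact element --- a genuine limit --- generates strictly decreasing ranges. The delicate point is therefore to use the faithfulness of the bimodule action together with the unbounded growth of the semisimple structure to manufacture honest limit elements of $\B$ (equivalently of $C=\B/J$) whose partial products keep cutting down the closed subspace $\overline{c_n\,\mathscr S}$ strictly; verifying that this decrease is strict at the level of \emph{closed} subspaces, rather than merely algebraically, is where the real work lies. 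Once such a sequence is in hand, the stability lemma delivers the desired contradiction.
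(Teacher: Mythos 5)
Your first two moves are sound: the reduction of the theorem to a uniform bound on $\dim A_i$ (via directedness, a member of maximal dimension absorbs all others, and density of $\bigcup_i\B_i$ then forces $\B/\mathscr I(\theta)$ to equal that member) is correct and complete, and the stability lemma for separating spaces is indeed the right engine --- note the paper itself does not prove this statement but imports it from Willis's thesis, so the comparison here is with the standard argument. The problem is that your proposal stops exactly where the content of the theorem begins: in the unbounded-dimension case you never construct the sequence that violates stability, and you say so yourself (``this is where the real work lies''). What you have is a correct reduction plus an announced strategy, not a proof. Moreover, the heuristic you offer for why this step should require ``honest limit elements'' of $\B$ rests on a false claim: an element of a finite-dimensional subalgebra $A_i\subseteq C=\B/\mathscr I(\theta)$ need not act with finite rank on $\mathscr S(\theta)$. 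Finite dimensionality of $A_i$ bounds the dimension of the \emph{space} of operators it induces on $\mathscr S(\theta)$, not the rank of any single operator; an idempotent spans a one-dimensional subalgebra and can still act as a projection with infinite-dimensional range (think of a unit, which acts as the identity). Your compact-operator analogy is an artifact of the special case where the module is the algebra itself, and it leads you to dismiss precisely the route that works.

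The standard completion uses only elements coming from the $\B_i$. Since the $\dim A_i$ are unbounded, so are their Wedderburn ranks $r(A_i)$ (the number of minimal idempotents in a decomposition of the unit, since $\dim A\le r(A)^2$), and along an increasing chain one produces an \emph{infinite} sequence $(q_n)$ of nonzero, mutually orthogonal idempotents inside $\bigcup_i(\B_i+\mathscr I(\theta))/\mathscr I(\theta)$: if the units $u_n$ of the chain do not stabilize, the differences $u_{n+1}-u_n$ already work (they are orthogonal because $u_nu_m=u_mu_n=u_{\min(m,n)}$); if they stabilize at $u$, one repeatedly splits a ``heavy'' corner $fA_mf$ of unbounded dimension into two nonzero orthogonal idempotents $e+e'=f$, keeping a heavy piece --- a Peirce-decomposition count, $\dim fA_mf\le 2\left(\dim eA_me+\dim e'A_me'\right)$, shows one of the two pieces must remain heavy. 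One then feeds these into the stability lemma not through $L_{b_n}$ but through $S_n=I-L_{b_n}$, where $b_n\in\B_{i_n}$ lifts $q_n$; this is legitimate because $S_n\theta-\theta R_n=\theta L_{b_n}-L_{b_n}\theta$ is continuous, and on $\mathscr S(\theta)$ the product $S_1\cdots S_n$ acts as multiplication by $1-(q_1+\cdots+q_n)$ since the action factors through $C$. Eventual constancy of $\overline{S_1\cdots S_n\mathscr S(\theta)}$ then gives $q_m\mathscr S(\theta)=\{0\}$ for all large $m$, the symmetric right-handed argument gives $\mathscr S(\theta)q_m=\{0\}$, and since $\mathscr I(\theta)$ is exactly the two-sided annihilator of $\mathscr S(\theta)$ this means $q_m=0$ in $C$ for large $m$, a contradiction. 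So the gap in your proposal is genuine and twofold: the key construction is missing, and the reason you give for looking elsewhere for it is mathematically incorrect.
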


The main application of this result is to lift the compact generation hypothesis of $\G$ from some of the results obtained in \cite{flores2024}. We would like to emphasize that this restriction was of fundamental importance in that work, since it allowed us to guarantee the existence of weight functions with remarkable properties (see \cite[Lemma 3.4]{flores2024}).

\begin{prop}\label{lifing}
    Let $(\G,\alpha,\omega,\A)$ be a twisted action, denote $\B=L^1_{\alpha,\omega}(\G,\A)$ and suppose we are given an $\B$-intertwining operator $\theta:\X_1\to\X_2$ between Banach $\B$-bimodules $\X_1,\X_2$ with the property that for all open, compactly generated subgroups $\H\subset \G$, the ideal $\mathscr I(\theta)\cap L^1_{\alpha,\omega}(\H,\A)$ has finite codimension in $L^1_{\alpha,\omega}(\H,\A)$, then $\mathscr I(\theta)$ has finite codimension in $\B$.
\end{prop}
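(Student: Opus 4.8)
The plan is to deduce the statement directly from Willis's theorem (Theorem~\ref{WILLIS}), applied to the directed family of Banach subalgebras $\B_\H := L^1_{\alpha,\omega}(\H,\A)$ indexed by the open, compactly generated subgroups $\H\subset\G$. First I would check that each $\B_\H$ is indeed a closed subalgebra of $\B$. Since $\H$ is open it is also closed, and the Haar measure of $\H$ is the restriction of the Haar measure of $\G$; hence extension by zero embeds $L^1_{\alpha,\omega}(\H,\A)$ isometrically into $\B$. Because $\H$ is a subgroup, the twisted convolution of two functions supported in $\H$ is again supported in $\H$, and the restricted tuple $(\H,\alpha|_\H,\omega|_{\H\times\H},\A)$ still satisfies (i)--(iii), so $\B_\H$ is itself a twisted convolution algebra and a Banach subalgebra of $\B$.

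Next I would verify the two hypotheses of Theorem~\ref{WILLIS}. The family is directed by inclusion: if $\H_1=\langle K_1\rangle$ and $\H_2=\langle K_2\rangle$ are open and compactly generated, then the subgroup $\langle K_1\cup K_2\rangle$ is generated by a compact set and contains the open subgroup $\H_1$, hence is a union of cosets of $\H_1$, so it is itself open and compactly generated and contains both. For the density condition $\B=\overline{\bigcup_\H\B_\H}$, I would use that $C_c(\G,\A)$ is dense in $\B$ together with the fact that every compact subset of $\G$ lies inside some open, compactly generated subgroup: fixing a compact symmetric neighbourhood $V$ of $\e$, the group $\langle V\cup {\rm supp}\,\Phi\rangle$ is open, compactly generated, and contains the support of any given $\Phi\in C_c(\G,\A)$. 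Thus each such $\Phi$ lies in some $\B_\H$, and density follows.

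For the second hypothesis I would argue that $\mathscr I(\theta)$ is a closed two-sided ideal of $\B$: it is closed, and since $\mathscr S(\theta)$ is a sub-bimodule of $\X_2$, the annihilator conditions defining $\mathscr I(\theta)$ are preserved under multiplication on either side. Consequently $\B_\H\cap\mathscr I(\theta)$ is a closed two-sided ideal of $\B_\H$, and by the standing hypothesis it has finite codimension in $\B_\H$. Since $\B_\H$ is a twisted convolution algebra, Theorem~\ref{semisimplicity} shows that the quotient $\B_\H/(\B_\H\cap\mathscr I(\theta))$ is semisimple as well as finite-dimensional. Both conditions of Willis's theorem being met, it yields that $\mathscr I(\theta)$ has finite codimension in $\B$, as desired. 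The only genuinely delicate point is the exhaustion/density argument of the second paragraph, which rests on the structure theory of locally compact groups; once the restricted tuples are recognized as twisted actions, the invocation of Theorems~\ref{semisimplicity} and~\ref{WILLIS} is routine.
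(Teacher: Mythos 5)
Your proposal is correct and follows essentially the same route as the paper: apply Willis's theorem (Theorem~\ref{WILLIS}) to the directed family $\B_\H=L^1_{\alpha,\omega}(\H,\A)$ over open, compactly generated subgroups $\H\subset\G$, with Theorem~\ref{semisimplicity} supplying semisimplicity of the finite-dimensional quotients. You merely fill in the routine verifications (directedness, density via $C_c(\G,\A)$, and closedness of $\B_\H\cap\mathscr I(\theta)$, which the paper instead gets by identifying it with the continuity ideal of $\theta$ restricted to $\B_\H$) that the paper leaves implicit.
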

\begin{proof}
    We note that $\mathscr I(\theta)\cap L^1_{\alpha,\omega}(\H,\A)$ coincides with the ideal of continuity of $\theta$, when $\theta$ is considered as an $L^1_{\alpha,\omega}(\H,\A)$-intertwining operator and it is therefore closed. Now, we consider the family $\{\H_i\}_i$ of compactly generated open subgroups of $\G$, ordered by inclusion, and note that the family $\B_i=L^1_{\alpha,\omega}(\H_i,\A)$ is a directed family of subalgebras of $\B$ such that $\B=\overline{\bigcup_i \B_i}$. The latter follows, for example, from the fact that $\bigcup_i \B_i$ contains all continuous functions of compact support.

Note that $\B_i/\B_i\cap \mathscr I(\theta)$ is finite-dimensional by assumption and semisimple by the Theorem \ref{semisimplicity}. Then, the result follows from applying the Theorem \ref{WILLIS}.
\end{proof}

In particular, we are now able to provide the following examples of automatic continuity.

\begin{cor}\label{ext}
    Let $\G$ be a nilpotent locally compact group. Let $\X$ be a Banach $\B$-bimodule and $\theta:\B\to\X$ a $\B$-intertwining operator. Then $\theta$ is automatically continuous for the following choices of $\B$: \begin{enumerate}
        \item[(i)] Twisted group algebras $L^1_\omega(\G)$, associated with a $2$-cocycle $\omega:\G\times \G\to \CC$.
        \item[(ii)] Convolution algebras $\ell^1_\alpha(\G,\A)$, where $(\G,\alpha,\A)$ is a $C^*$-dynamical system with $\A$ being a unital nuclear $C^*$-algebra.
        \end{enumerate}
\end{cor}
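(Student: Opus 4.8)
The plan is to handle (i), (ii) and (iii) by a single mechanism, the differing hypotheses serving only to supply the appropriate input from \cite{flores2024}. In each case $\B$ is a twisted convolution algebra $L^1_{\alpha,\omega}(\G,\A)$: in (i) one takes $\A=\CC$, $\alpha$ trivial and $\omega$ the given scalar cocycle; in (ii) and (iii) one takes $\omega$ trivial and, since the algebra is $\ell^1$, the group $\G$ discrete, with $\A$ the indicated coefficient algebra. All of these algebras carry a bounded approximate identity, so Theorem \ref{semisimplicity} is available and guarantees that every closed cofinite two-sided ideal of $\B$ gives a semisimple quotient; this is the structural input we shall use at the very end.

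First I would verify the hypothesis of Proposition \ref{lifing}. Fix an open, compactly generated subgroup $\H\subset\G$. As subgroups of nilpotent groups are nilpotent, $\H$ is a compactly generated nilpotent locally compact group, hence of polynomial growth, so that the weighted constructions on which \cite{flores2024} depends are available for $\H$. The purpose of the three hypotheses is precisely to place $\H$ inside the scope of the corresponding theorem of \cite{flores2024}: a scalar $2$-cocycle in (i); an abelian or finite-dimensional coefficient $C^*$-algebra in (ii); and a compact $X$ carrying a topologically free action, together with $\G$ infinite, in (iii). Applying that theorem to $\H$ shows that $\mathscr I(\theta)\cap L^1_{\alpha,\omega}(\H,\A)$---which, as recorded in the proof of Proposition \ref{lifing}, is the continuity ideal of $\theta$ regarded as an $L^1_{\alpha,\omega}(\H,\A)$-intertwining operator---has finite codimension in $L^1_{\alpha,\omega}(\H,\A)$.

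With this local statement established for every such $\H$, Proposition \ref{lifing} (which is Willis's Theorem \ref{WILLIS} fed by the semisimplicity of Theorem \ref{semisimplicity}) immediately yields the global conclusion that $\mathscr I(\theta)$ has finite codimension in $\B$. To finish, I would invoke the Dales--Willis Theorem \ref{DAWI} of \cite{DaWi83}: since $\B$ has all of its cofinite quotients semisimple by Theorem \ref{semisimplicity} and $\mathscr I(\theta)$ has just been shown to have finite codimension, the theorem forces $\theta$ to be continuous. As this argument is insensitive to the particular case, it disposes of (i), (ii) and (iii) simultaneously.

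The main obstacle lies entirely in the first step: correctly identifying, for each of (i), (ii) and (iii), the precise theorem of \cite{flores2024} that applies to the compactly generated subgroups $\H$, and confirming that nilpotency of $\G$ descends to $\H$ in the strong form (polynomial growth, together with the existence and control of weights) that those theorems require. Once that matching is made, the passage from local to global finite codimension and then to continuity is a purely formal chaining of Proposition \ref{lifing} and Theorem \ref{DAWI}; the genuinely analytic content is confined to \cite{flores2024} and is not reproved here.
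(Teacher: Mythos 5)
Your reduction and lifting steps are exactly the paper's own: nilpotency passes to open, compactly generated subgroups $\H\subset\G$, the cited theorems of \cite{flores2024} apply to each such $\H$ (this is where compact generation, hence the weight machinery, is needed) and give finite codimension of $\mathscr I(\theta)\cap L^1_{\alpha,\omega}(\H,\A)$, and Proposition \ref{lifing} then makes $\mathscr I(\theta)$ cofinite in $\B$. The gap is your final step. Theorem \ref{DAWI} cannot ``force $\theta$ to be continuous'': it asserts an equivalence of four conditions, none of which you have verified --- knowing that the single ideal $\mathscr I(\theta)$ is closed and cofinite is neither condition (iii) nor condition (iv) --- and, even where all four conditions do hold, they speak only of homomorphisms with finite-dimensional range and of derivations into finite-dimensional bimodules. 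Your $\theta$ is a general $\B$-intertwining operator into an arbitrary Banach $\B$-bimodule $\X$, about which Theorem \ref{DAWI} says nothing.

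Moreover, the implication you are implicitly using --- cofinite continuity ideal implies continuity --- is false for general Banach algebras: if $\phi$ is a discontinuous homomorphism from a Banach algebra onto a finite-dimensional algebra, then $\mathscr I(\phi)$ is the preimage under $\phi$ of the annihilator of $\mathscr S(\phi)$, hence closed and of finite codimension, yet $\phi$ is discontinuous; ruling out precisely this phenomenon is the point of conditions (i)--(iv) in Theorem \ref{DAWI}. What the last step actually requires is a factorization argument: write $\B=\mathscr I(\theta)\oplus F$ with $F$ finite-dimensional, note that $\theta|_F$ is trivially continuous, and prove continuity of $\theta$ on $\mathscr I(\theta)$ using that this closed cofinite ideal has a bounded approximate identity, so that by Cohen factorization any null sequence $a_n$ in $\mathscr I(\theta)$ factors as $a_n=b\,c_n$ with $b\in\mathscr I(\theta)$ and $c_n\to 0$, whence $\theta(a_n)=\theta(bc_n)\to 0$ because $\xi\mapsto\theta(b\xi)$ is continuous by definition of the continuity ideal. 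In the paper this closing step is not redone; it is contained in the cited results \cite[Theorem 4.14, Corollary 4.21]{flores2024}, whose proofs pass from cofiniteness of the continuity ideal to continuity in exactly this way. So you should either invoke that part of \cite{flores2024} as well, or supply the bounded approximate identity (available here, since nilpotent groups are amenable, cf.\ \cite[Theorem 5.3]{flores2024}) and run the factorization; Theorem \ref{DAWI} is the wrong tool, and in this paper it is used only for the later corollary on finite-dimensional-valued maps, where condition (iv) is verified directly.
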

\begin{proof}
    By combining Proposition \ref{lifing} with \cite[Corollary 4.21]{flores2024}, we know that $\mathscr I(\theta)\subset \B$ is a closed cofinite ideal. Moreover, in both cases the algebra $\B$ has the following property: every closed and cofinite two-sided ideal $I\subset\B$ has a bounded norm left approximate identity.

This property just mentioned is proved directly in the first case \cite[Theorem A.3]{flores2024} and follows from the combination of \cite[Proposition VII.2.31]{He89} with the fact that $\ell^1_\alpha(\G,\A)$ is amenable \cite[Proposition IV.4.2]{Je99} in the second.

That said, we can repeat part of the argument in \cite[Theorem 3.6]{flores2024} to conclude the proof. Indeed, due to the Cohen-Hewitt factorization theorem \cite[Corollary 11.12]{BD73}, for every sequence $\{b_n\}\subset\mathscr I(\theta)$ that converges to zero, there exist $c, d_n\in\mathscr I(\theta)$ that factor $b_n$: 
$$
b_n=cd_n\quad\text{y}\quad \lim_{n} d_n=0.
$$ 
Since the function $\B\ni d\mapsto \theta(cd)$ is continuous by the definition of $\mathscr I(\theta)$, we have 
$$
\lim_n\theta(b_n)=\lim_n\theta(cd_n)=0
$$ 
and therefore the restriction of $\theta$ to $\mathscr I(\theta)$ is continuous. Since $\mathscr I(\theta)$ has finite codimension, $\theta$ is in fact continuous throughout $\B$.
\end{proof}

Now we will restrict ourselves to the study of (some) finite-dimensional valued intertwining operators; this will provide more flexibility on the assumptions about the twisted action. Dales and Willis proved the following theorem in \cite[Theorem 2.5]{DaWi83} and it will be our main motivation for what follows.

\begin{thm}[Dales-Willis]\label{DAWI}
    Let $\B$ be a Banach algebra such that $\B/I$ is semisimple for every cofinite, closed two-sided ideal $I\subset \B$. Then the following conditions are equivalent\begin{enumerate}
        \item[(i)] Each homomorphism from $\B$ with finite-dimensional range is continuous.
        \item[(ii)] Every derivation into a finite-dimensional Banach $\B$-bimodule is continuous.
        \item[(iii)] Every cofinite two-sided ideal of $\B$ is closed.
        \item[(iv)] $I^2$ is closed and cofinite, for every closed, cofinite two-sided ideal $I\subset \B$.
    \end{enumerate}
\end{thm}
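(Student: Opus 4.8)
The plan is to route everything through the separating space $\mathscr S(\theta)$ and continuity ideal $\mathscr I(\theta)$ introduced above, exploiting that both homomorphisms and derivations are $\B$-intertwining operators (a homomorphism $\phi\colon\B\to F$ is intertwining once $F$ is given the $\B$-bimodule structure $b\cdot y=\phi(b)y$, $y\cdot b=y\phi(b)$). I would first dispose of (i)$\Leftrightarrow$(iii), which needs no semisimplicity: a linear map into a finite-dimensional space is continuous iff its kernel is closed, and the cofinite two-sided ideals of $\B$ are exactly the kernels of the quotient homomorphisms $\B\to\B/I$ onto finite-dimensional algebras. Thus (i) says precisely that every such kernel is closed, which is (iii).

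The heart of the matter is the continuity ideal. For an intertwining operator $\theta\colon\B\to\X$ with $\dim\X<\infty$ the separating space $\mathscr S(\theta)$ is a finite-dimensional sub-bimodule, and the two-sided action of $\B$ on it is a finite-dimensional representation whose kernel is exactly $\mathscr I(\theta)$; hence $\mathscr I(\theta)$ is a closed, cofinite, two-sided ideal, and the standing hypothesis forces $\B/\mathscr I(\theta)$ to be semisimple, in particular unital. I would prove (iii)$\Rightarrow$(ii) by showing $\mathscr S(D)=0$: lifting the identity of $\B/\mathscr I(D)$ to some $e\in\B$, the relations $ec-c,\ ce-c\in\mathscr I(D)$ make $D(e\,\cdot\,)$ and $D(\,\cdot\,e)$ differ from $D$ only by maps that are continuous, and the stability lemma for separating spaces (the chain $\mathscr S(D)\supseteq a_1\mathscr S(D)\supseteq a_1a_2\mathscr S(D)\supseteq\cdots$ terminates) lets one propagate this through the semisimple unital quotient until $\mathscr S(D)$ is annihilated on both sides. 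Semisimplicity is indispensable here: it is the absence of a radical in $\B/\mathscr I(D)$ that prevents $\mathscr S(D)$ from surviving. The loop is then closed by (ii)$\Rightarrow$(iii), realizing the obstruction to closedness of a cofinite ideal $I$ as a derivation into the finite-dimensional bimodule $\overline I/I$ and invoking Wedderburn--Malcev splitting of the radical extension $\B/I\to\B/\overline I$.

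For the remaining condition I would treat (iii)$\Leftrightarrow$(iv) separately. For (iv)$\Rightarrow$(iii), take a cofinite two-sided ideal $I$, set $J=\overline I$, and use (iv) to get $J^2$ closed and cofinite; working inside the finite-dimensional algebra $\B/J^2$ one checks that $I+J^2$ is an ideal, hence closed, so $\overline I=I+J^2$ and the image $N:=\overline I/I$ in $\B/I$ satisfies $N=N^2$. Since the elements of $N$ are seminorm-null, hence quasinilpotent, one aims to show $N$ is a nil (hence nilpotent) ideal, whereupon $N=N^2$ forces $N=0$ and $I$ is closed. Conversely, for (iii)$\Rightarrow$(iv), semisimplicity makes $\B/I$ unital; combined with the essentiality coming from the bounded approximate identity (so that $I=\overline{I^2}$ and no non-unital part survives), this renders $I/I^2$ a finite-dimensional unital $\B/I$-bimodule, giving cofiniteness of $I^2$, and closedness of $I^2$ then follows by relating it to a cofinite ideal and applying (iii).

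The step I expect to be the main obstacle is the continuity of derivations, (iii)$\Rightarrow$(ii): converting the abstract statement ``$\B/\mathscr I(D)$ is semisimple'' into the genuine vanishing of the separating space. The danger is precisely that $\mathscr S(D)$ might persist as a nonzero bimodule on which the quotient acts non-unitally, and excluding this is what truly consumes the semisimplicity hypothesis together with the stability lemma. A closely related delicate point, which I would flag explicitly, is passing from ``quasinilpotent'' to ``nilpotent in $\B/I$'' for the ideal $\overline I/I$, and the good behaviour of $I^2$ in (iv); both are controlled by semisimplicity in tandem with the bounded approximate identity enjoyed by the algebras $L^1_{\alpha,\omega}(\G,\A)$ to which the theorem is ultimately applied.
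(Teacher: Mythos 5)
First, a point of order: the paper does not prove this theorem at all --- it is imported verbatim from Dales and Willis (cited as \cite[Theorem 2.5]{DaWi83}), so your attempt can only be judged on its own merits. Several ingredients are correct: the equivalence (i)$\Leftrightarrow$(iii) is exactly as easy as you say (a linear map with finite-dimensional range is continuous iff its kernel is closed, no semisimplicity needed); for an intertwining operator $\theta$ with finite-dimensional range, $\mathscr I(\theta)$ is indeed a closed, cofinite, two-sided ideal, so the standing hypothesis makes $\B/\mathscr I(\theta)$ semisimple and hence unital; and inside your (iv)$\Rightarrow$(iii), the deduction that $I+\overline{I}^{\,2}$ is closed, whence $\overline{I}=I+\overline{I}^{\,2}$ and $N=N^2$, is valid. (One parenthetical claim is false but harmless: an algebra homomorphism $\phi\colon\B\to F$ is \emph{not} an intertwining operator for the actions $b\cdot y=\phi(b)y$, $y\cdot b=y\phi(b)$, since these actions are not continuous --- $F$ is not a Banach $\B$-bimodule --- unless $\phi$ is already continuous; your actual (i)$\Leftrightarrow$(iii) argument never uses this, fortunately.)

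The two hard implications, however, rest on steps that are wrong rather than merely compressed. In (iv)$\Rightarrow$(iii), the assertion that elements of $N=\overline{I}/I$ are ``seminorm-null, hence quasinilpotent'' is false: the kernel of a submultiplicative seminorm on a finite-dimensional algebra can contain nonzero idempotents. On $\CC^2$ with coordinatewise product, $p(x_1,x_2)=|x_1|$ is submultiplicative and its kernel is spanned by the idempotent $(0,1)$; seminorm-nullity of $x\in N$ controls the spectrum of $x$ in $\B/\overline{I}$ (quotients shrink spectra), not in $\B/I$. Proving $N$ nil is the same as proving $N\subseteq{\rm rad}(\B/I)$, i.e.\ that every finite-dimensional irreducible representation of $\B$ vanishing on $I$ has closed kernel --- ruling out \emph{dense} cofinite maximal two-sided ideals is precisely the hard core of Dales--Willis, and your argument assumes it. The same core is missing in (iii)$\Rightarrow$(ii): from $ec-c,\ ce-c\in\mathscr I(D)$ you cannot conclude that $c\mapsto D(ec)-D(c)$ is continuous. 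Membership $b\in\mathscr I(D)$ makes $\xi\mapsto D(b\xi)$ continuous; it says nothing about $D$ evaluated along a continuous curve \emph{inside} $\mathscr I(D)$, which would require continuity of $D|_{\mathscr I(D)}$ --- part of what is to be proved. In fact, since $D(ec)=eD(c)+D(e)c$, the map $c\mapsto D(ec)$ is continuous if and only if $e\,\mathscr S(D)=\{0\}$, a statement of the same difficulty as the goal, so the argument is circular; and the stability lemma cannot break the circle, because in a finite-dimensional module descending chains of subspaces stabilize trivially, so it carries no information here. Finally, your (iii)$\Rightarrow$(iv) and closing paragraph lean on a bounded approximate identity: the theorem concerns arbitrary Banach algebras satisfying the semisimplicity hypothesis, and no approximate identity is available --- importing the one enjoyed by $L^1_{\alpha,\omega}(\G,\A)$, the algebra to which the \emph{paper} applies the theorem, into the proof of the general theorem is a category error. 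For what it is worth, (iii)$\Rightarrow$(iv) can be done without it: since $I\cdot(I/I^2)=(I/I^2)\cdot I=\{0\}$, the space $I/I^2$ is a bimodule over the finite-dimensional algebra $\B/I$; if $I^2$ were not both closed and cofinite, there would be a discontinuous linear functional $\lambda$ on $I$ vanishing on $I^2$, and the largest $\B$-sub-bimodule of $\ker\lambda$ would be a non-closed cofinite two-sided ideal of $\B$, contradicting (iii). Relatedly, the bimodule $\overline{I}/I$ you invoke in (ii)$\Rightarrow$(iii) is not a Banach $\B$-bimodule when $I$ is not closed (its module maps are not continuous), so condition (ii) cannot be applied to it as stated; the standard repair via Wedderburn--Malcev requires continuity of the induced homomorphism onto the semisimple part of $\B/I$, which is once again the missing core.
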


And so an application of Theorem \ref{semisimplicity} yields the following proposition.
\begin{prop}
    Let $(\G,\alpha,\omega,\A)$ a twisted action. Then all conditions in Theorem \ref{DAWI} are equivalent for $L^1_{\alpha,\omega}(\G,\A)$.
\end{prop}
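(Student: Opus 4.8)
The plan is to observe that this proposition is essentially immediate once one matches the standing hypothesis of the Dales--Willis theorem against the conclusion we have already established in Theorem \ref{semisimplicity}. Indeed, Theorem \ref{DAWI} asserts the equivalence of conditions (i)--(iv) only \emph{under the assumption} that $\B/I$ is semisimple for every cofinite, closed, two-sided ideal $I\subset\B$. Thus the entire argument reduces to verifying this single hypothesis for the twisted convolution algebra $\B=L^1_{\alpha,\omega}(\G,\A)$.

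First I would invoke Theorem \ref{semisimplicity} verbatim: since $(\G,\alpha,\omega,\A)$ is a twisted action, every closed, two-sided ideal of finite codimension in $\B=L^1_{\alpha,\omega}(\G,\A)$ produces a semisimple quotient. This is precisely the assumption required in the statement of Theorem \ref{DAWI}. With this hypothesis in hand, I would then apply Theorem \ref{DAWI} directly to $\B$ and conclude that conditions (i)--(iv) are equivalent.

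There is no genuine obstacle in the present proposition; all of the substantive work has already been carried out in the proof of Theorem \ref{semisimplicity}, via the unitary multipliers $m_{u,y}$, the compact subgroup $K\subset{\rm GL}(\mathcal X)$ they generate, and the averaged inner product $\langle\cdot,\cdot\rangle_\nu$ that converts $\pi$ into a $^*$-representation with image a $C^*$-algebra. The role of this proposition is simply to package that semisimplicity result together with the Dales--Willis machinery, exhibiting the twisted convolution algebras as a concrete and broad class of Banach algebras to which their equivalences apply.
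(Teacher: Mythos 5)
Your proposal is correct and matches the paper's own (implicit) argument exactly: the paper introduces this proposition with ``an application of Theorem \ref{semisimplicity} yields...'', i.e., Theorem \ref{semisimplicity} supplies precisely the semisimplicity hypothesis of Theorem \ref{DAWI}, which is then applied verbatim. Nothing more is needed.
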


In particular, we obtain many classes of examples for this finite-dimensional phenomena. We compile them in the following corollary. As we shall see next, this restriction we just imposed on ourselves to finite-dimensional valued intertwining operators allows us to greatly extend previous results by allowing us to drop the assumptions on the group (cf. \cite[Corollary 4.21]{flores2024}).
\begin{cor}
    Let $\B$ be one of the following algebras. \begin{enumerate}
        \item[(i)] $L^1_\omega(\G)$, for an amenable group $\G$ and a $2$-cocycle $\omega:\G\times \G\to \CC$.
        \item[(ii)] $\ell^1_\alpha(\G,\A)$, for an (untwisted) action $(\G,\alpha, \A)$ where $\G$ is discrete and amenable and $\A$ is a nuclear $C^*$-algebra.
    \end{enumerate} Then $\A$ satisfies all the conditions in Theorem \ref{DAWI}.
\end{cor}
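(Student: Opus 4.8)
The plan is to recognise both algebras as instances of the twisted convolution algebras studied above and then invoke the preceding Proposition. Indeed, $L^1_\omega(\G)$ is $L^1_{\alpha,\omega}(\G,\CC)$ with $\A=\CC$ and $\alpha\equiv\id$, while $\ell^1_\alpha(\G,\A)$ is $L^1_{\alpha,\omega}(\G,\A)$ with trivial cocycle $\omega\equiv 1$ and $\G$ discrete (so that the Haar measure is counting measure and $L^1=\ell^1$). For each of these the hypotheses of the preceding Proposition hold, so conditions (i)--(iv) of Theorem \ref{DAWI} are mutually equivalent for $\B$; it therefore suffices to verify exactly one of them, and I would target condition (iv): that $I^2$ be closed and cofinite for every closed, cofinite two-sided ideal $I\subset\B$.

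The mechanism for establishing (iv) is the amenability of $\B$ as a Banach algebra. In case (i) the twisted group algebra of an amenable group is amenable, and in case (ii) the $\ell^1$-crossed product $\ell^1_\alpha(\G,\A)$ is amenable whenever $\G$ is amenable and $\A$ is nuclear, i.e.\ amenable as a $C^*$-algebra; I would cite these facts rather than reprove them. Granting amenability, fix a closed two-sided ideal $I\subset\B$ of finite codimension. As a closed subspace of finite codimension, $I$ is automatically complemented in $\B$, hence weakly complemented, and by the standard result that a weakly complemented closed ideal of an amenable Banach algebra admits a bounded approximate identity, $I$ has one.

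Cohen's factorization theorem, applied inside $I$, then yields $I=I\cdot I$, so that $I^2=I$ is closed and of finite codimension; this is precisely condition (iv). By the equivalence furnished by the preceding Proposition, all four conditions of Theorem \ref{DAWI} hold for $\B$, as claimed. The soft part of the argument is this ideal theory; the delicate point, and the step I expect to cost the most, is pinning down and correctly attributing the amenability of the two algebras---showing that twisting $L^1(\G)$ by a unimodular scalar $2$-cocycle preserves amenability (for instance by passing to a central extension), and that the $\ell^1$-crossed product is amenable under the stated hypotheses on $\G$ and $\A$.
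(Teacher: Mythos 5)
Your proposal is correct, and its skeleton is the same as the paper's: both reduce, via the preceding Proposition, to verifying condition \emph{(iv)} of Theorem \ref{DAWI}, both do so by producing a bounded approximate identity in each closed cofinite two-sided ideal, and both then conclude $I^2=I$ by Cohen--Hewitt factorization. The difference lies in how the approximate identity is obtained in case (i). The paper simply cites its own prior result \cite[Theorem 5.3]{flores2024}, which asserts that for amenable $\G$ \emph{every} closed two-sided ideal of $L^1_\omega(\G)$ has a bounded approximate left identity, and reserves the Banach-algebraic amenability argument (Helemski\u{\i} plus \cite{Je99}) for case (ii) only. You instead run the amenability argument uniformly in both cases: $\B$ amenable, cofinite closed ideals complemented hence weakly complemented, hence possessing bounded approximate identities. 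This buys uniformity and self-containment modulo standard amenability theory, but it obliges you to establish that $L^1_\omega(\G)$ is an amenable Banach algebra when $\G$ is amenable --- the central-extension argument you sketch (pass to $\G_\omega=\G\times_\omega\T$, which is amenable as an extension of an amenable group by a compact one, apply Johnson's theorem, and realize $L^1_\omega(\G)$ as a quotient, namely a homogeneous component, of $L^1(\G_\omega)$) is sound for the continuous cocycles considered here, and you correctly identify it as the point requiring care. The paper's route for case (i) avoids this entirely at the cost of leaning on a stronger external result; yours needs only the cofinite ideals, which is all the statement requires.
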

\begin{proof}
   We verify condition \emph{(iv)} of theorem \ref{DAWI}. As in the proof of Corollary \ref{ext}, we see that every closed cofinite two-sided ideal $I\subset\B$ has a bounded left approximate identity. In this case, $I=I^2$ also follows from the Cohen-Hewitt factorization theorem.
\end{proof}

\section*{Acknowledgments}

This work was supported by NSF project DMS-2144739. The author sincerely thanks Professor Ben Hayes for the many interesting discussions surrounding this topic. The author is also grateful to Diego Jauré, Moria Labraña, and the reviewers for their helpful comments on earlier versions of the article.

\printbibliography

\bigskip
\bigskip
ADDRESS

\smallskip
Felipe I. Flores

Department of Mathematics, University of Virginia,

114 Kerchof Hall. 141 Cabell Dr,

Charlottesville, Virginia, United States

E-mail: hmy3tf@virginia.edu

\end{document}